\newtheorem{theorem}{Theorem}[section]
\newtheorem{proposition}[theorem]{Proposition}
\newtheorem{lemma}[theorem]{Lemma}
\newtheorem{remark}{Remark}[section]
\theoremstyle{definition}
\newtheorem{definition}[theorem]{Definition}
\newcommand{\R}{\mathbb{R}}
\newcommand{\C}{\mathbb{C}}
\newcommand{\M}{\mathcal{M}}
\newcommand{\N}{\mathbb{N}}
\newcommand{\D}{\mathcal D}
\newcommand{\an}[1]{\left\langle #1 \right\rangle}
\newcommand{\Man}{\mathcal{M}}
\newcommand{\vertiii}[1]{{\left\vert\kern-0.25ex\left\vert\kern-0.25ex\left\vert #1 
    \right\vert\kern-0.25ex\right\vert\kern-0.25ex\right\vert}}
\title[Strichartz estimates for the Dirac equation]{Strichartz estimates for the Dirac equation on asymptotically flat manifolds}
\begin{document}

\author{Federico Cacciafesta}
\address{Federico Cacciafesta: 
Dipartimento di Matematica, Universit\'a degli studi di Padova, Via Trieste, 63, 35131 Padova PD, Italy}
\email{cacciafe@math.unipd.it}

\author{Anne-Sophie de Suzzoni}
\address{Anne-Sophie de Suzzoni:
CMLS, \'Ecole  Polytechnique, CNRS, Universit\'e Paris-Saclay, 91128 PALAISEAU Cedex, France}
\email{anne-sophie.de-suzzoni@polytechnique.edu}

\author{Long Meng}
\address{Long Meng:
CERMICS, \'Ecole des ponts ParisTech, 6 and 8 av. Pascal, 77455 Marne-la-Vall\'ee, France}
\email{long.meng@enpc.fr}

\keywords{Dirac equation, Strichartz estimates, asymptotically flat manifolds}
\subjclass[2010]{35Q41, 42B37}

\maketitle

\begin{abstract}
In this paper we prove Strichartz estimates for the Dirac equation on asymptotically flat manifolds. The proof combines the weak dispersive estimates proved in \cite{Weakdis} with the Strichartz and smoothing estimates for the wave and Klein-Gordon flows exploiting the results in \cite{sogge2010concerning} and \cite{d2015kato}-\cite{zhang2019strichartz} in the same geometrical setting. 
\end{abstract}

\section{Introduction}


 In \cite{Weakdis}, the authors started the study of dispersive dynamics of the Dirac equation in a non-flat setting, proving local smoothing estimates in the cases of asymptotically flat manifolds and warped products. Later on, in \cite{cacdes2} and \cite{cacdes3}, the authors proved respectively local and global in time weighted Strichartz estimates for spherically symmetric manifolds: in this case, it is indeed possible to take advantage of the so called {\em partial wave decomposition}, which is analog of the spherical harmonics decomposition for the Dirac operator, in order to recast the equation into a sum of ``radial Dirac equations'' with potentials, for which several results are available. The problem of proving Strichartz estimates {\em without} the assumption of spherical symmetry on the manifold could not be solved directly, as indeed it was not possible to apply the standard Duhamel trick to deduce them from the ones on $\mathbb{R}^3$, since the equation in this framework is a first order perturbation of the flat one. The purpose of this short paper is to fill this gap, proving in fact Strichartz estimates for the Dirac equation on asymptotically flat manifolds of dimension 3.

We thus consider the following Cauchy problem
\begin{align}\label{eq:ques-D}
    \left\{
    \begin{aligned}
    &i\partial_t u -\D_m u=0\\
    &u(0,x)=u_0(x),\quad x\in\R^3.
    \end{aligned}
    \right.
\end{align}
where $u:\Man\rightarrow \mathbb{C}^4$, $(\Man,g)$ is a manifold with a Lorentzian metrics $g$ that decouples space and time. In other words, we assume that it writes
\begin{equation}\label{struct}
g_{jk} = \left \lbrace{\begin{array}{ll}
1 & \textrm{ if } j= k = 0\\
0 & \textrm{ if } jk = 0 \textrm{ and } j\neq k\\
-h_{jk}(x) & \textrm{ otherwise. }\end{array}} \right.
\end{equation}
endowed with a spin structure. We refer to Section 2 in \cite{Weakdis} (see also \cite{parktoms}, section 5.6) for all the details on the construction and properties of the Dirac operator on a manifold $\Man$: here, we very briefly limit ourselves to recall that the Dirac operator with mass $m\geq0$ can be written as
\begin{align}
    \D_m=-i\gamma^0\gamma^ae^i_{\; a}D_i-\gamma^0m.
\end{align}
where $\gamma^j$ denote the standard Dirac matrices, that is $\gamma^0=\beta$ and $\gamma^j=\gamma^0\alpha_j$ for $j=1,2,3$ with 
\begin{equation}\label{diracmatrices}
\alpha_j=\left(\begin{array}{cc}0 & \sigma_j \\\sigma_j & 0\end{array}\right),\quad \beta=
\left(\begin{array}{cc}I_2 & 0 \\0 & -I_2\end{array}\right)
\end{equation}
and $\sigma_j$ are the Pauli matrices
\begin{equation}
\sigma_1=\left(\begin{array}{cc}0 & 1 \\1 & 0\end{array}\right),\quad
\sigma_2=\left(\begin{array}{cc}0 &
-i \\i & 0\end{array}\right),\quad
\sigma_3=\left(\begin{array}{cc}1 & 0\\0 & -1\end{array}\right),
\end{equation}
$e^i_{\; a}$ is a matrix bundle satisfying 
\begin{equation}\label{dre}
    h^{ij} = e^i_{\; a}\delta^{ab}e^j_{\; b}
\end{equation}
where $\delta$ is the Kronecker symbol. We call it a {\em dreibein} since it is restriction to space of a {\em vierbein}. A vierbein is a matrix bundle $\tilde e^\mu_{\; a}$ satisfying 
\[
\tilde e^\mu_{\; a}\eta^{ab}\tilde e^\nu_{\; b} = g^{\mu\nu};
\]
setting $e^j_{\; a} = \tilde e^j_{\; a}$ for $j,a = 1,2,3$, we get indeed \eqref{dre}. Note that the existence of such a dreibein is induced by the asymptotic flatness of the manifold. The covariant derivative $D_i$ is given by
\begin{align}\label{conneq}
    D_0=\partial_0,\quad D_j=\partial_j+B_j,\quad j=1,2,3
\end{align}
where $B_j$ writes
\[
B_j = \frac{1}{8} [\gamma_a,\gamma_b] \Omega_j^{\; ab}.
\]
It has a purely geometric part $\omega_j^{\; ab}$, called the {\em spin connection} 
\begin{equation}\label{spincon}
    \omega_j^{\;ab}=e^i_{\; a}\partial_j e^{ib}+e_i^{\;a}\Gamma_{\; jk}^ie^{kb}
\end{equation}
with the Christoffel symbol (or affine connection) $\Gamma^i_{\; jk}$ given by
\begin{equation}\label{ichtus}
\Gamma_{\; jk}^i:=\frac{1}{2}h^{il}(\partial_j h_{lk}+\partial_kh_{jl}-\partial_l h_{jk}),
\end{equation}
and a purely algebraic part $\frac18[\gamma_a,\gamma_b]$, which is due to the nature of the particle we consider (here a pair electron-positron) and more specifically to its spin (here $\frac12\oplus \frac12$).

Finally we recall that the \textit{scalar curvature} $\mathcal{R}_h$ writes
\begin{equation}\label{scacur}
\mathcal{R}_h:=h^{jk}\left(\partial_i\Gamma_{\; jk}^i-\partial_k\Gamma_{\; ji}^i+\Gamma_{\;jk}^l\Gamma^i_{\; il}-\Gamma^l_{\; ji}\Gamma_{\; kl}^i\right).
\end{equation}

\medskip

For what concerns the manifold $\Man$, we assume the following

\medskip

{\bf Assumptions (A).} Let $(\Man,g)$ be a 4-dimensional Lorentzian manifold with a metrics $g$ having the structure given by \eqref{struct}, and $h\in C^\infty(\mathbb{R}^3)$.
We assume that there exists a constant $C_h$ and $\sigma \in (0,1)$ such that for all $\alpha \in \N^3$ such that $|\alpha| = \alpha_1+\alpha_2+\alpha_3\leq 3$ and all $x$,
\begin{equation}\label{assasy}
 |\partial^\alpha(h_{ij}(x)-\delta_{ij})|\leq  C_h\an{x}^{-|\alpha|-1-\sigma}
\end{equation}
where $\partial^\alpha = \partial_1^{\alpha_1}\partial_2^{\alpha_2}\partial_3^{\alpha_3}$. 

\begin{remark}
The assumptions above are fairly standard, and manifolds satisfying these are usually referred to as asymptotically flat manifolds. We stress the fact that requiring the constant $C_h$ to be small enough is a sufficient condition to ensure that the manifold is  non trapping (see e.g. \cite{tataru}, \cite{cacciafestadanconaluca}): thus, we will not have to assume this condition, that is crucial in order to have dispersion. Besides, it is easy to see that condition \eqref{assasy} holds for the inverse matrix of $h$ as well, provided the constant $C_h$ is sufficiently small. Finally, let us mention the fact that the decay condition \eqref{assasy} might not be optimal; in particular, the power $-|\alpha|-1-\sigma$  could be weakened to $-|\alpha|-\sigma$ in the massless case, but we here prefer to provide a unified and much simpler presentation of the results.
\end{remark}

\begin{remark}\label{rem:fBR}

According to \eqref{dre}, one can bound, in the sense of matrices, the square of $e$ with $h$. As $h$ is ``close'' to the identity, estimate \eqref{assasy} holds true for the matrices $e$. Thus, under assumptions {\bf (A)}, it is possible to prove that there exist constants $C_B,\,C_B',\,C_{\Gamma},\,C_R>0$ such that
\[
|B|\leq C_BC_h\an{x}^{-2-\sigma},\quad |\partial B|\leq C_B'C_h\an{x}^{-2-\sigma}
\]
\[
|\mathcal{R}_h|\leq C_RC_h\an{x}^{-3-\sigma},\quad |\Gamma|\leq C_{\Gamma}C_h\an{x}^{-2-\sigma}.
\]
These bounds will be proved in forthcoming Proposition \ref{propbound}.
\end{remark}

Before stating our Theorem, let us recall the definition of admissible Strichartz triple:

\begin{definition}
 In dimension $3$, the triple $(s,q,r)$ is called {\em wave admissible} if 
\[
\frac{1}{q}= \frac{1}{2}-\frac{1}{r},\quad 2\leq q,r\leq \infty,\quad  r\neq\infty \quad s=\frac{1}{2}-\frac{1}{r}+\frac{1}{q}.
\]
The triple $(s,q,r)$ is called {\em Klein-Gordon} (or {\em Schr\"odinger}) {\em admissible} if
\[
\frac{2}{q}= \frac{3}{2}-\frac{3}{r},\quad 2\leq q\leq \infty,\quad  2\leq r\leq 6 \quad s=\frac{1}{2}-\frac{1}{r}+\frac{1}{q}.
\]
\end{definition}

We will use the standard notation $L^p$ for Lebesgue spaces; more precisely, the norms in time will be denoted by $L^p_t$ and will be intended to be on $\mathbb{R}^+$, and we define
\begin{equation*}
  \|f\|_{L^p(\Man_h)}^p= \int_{\mathbb{R}^3}|f(x)|^p \sqrt{\det h(x)}dx.
\end{equation*}
Then we define the $H^s_p$ and $\dot H^s_p$ norms for $s\in \N$ as
\begin{equation*}
    \|f\|_{\dot H^s_p(\Man_h)}: =\|(-\widetilde{\Delta}_h)^{s/2}f\|_{L^p(\Man_h)},  \quad \|f\|_{ H^s_p(\Man_h)} :=\|(1-\widetilde{\Delta}_h)^{s/2}f\|_{L^p(\Man_h)}
\end{equation*}
where $\widetilde{\Delta}_h$ is the standard Laplace-Beltrami operator. For negative $s$, we define these spaces by duality and for fractional $s$, we define these spaces by interpolation.

\medskip

Our main result is then the following:
\begin{theorem}\label{teo1}
Let $(\Man,g)$ be as given by assumptions {\bf (A)}. Then the massless Dirac flow satisfies the Strichartz estimate:
\begin{align}\label{massless}
    \|e^{it\D}u_0\|_{L^q_t \dot{H}^{1-s}_r(\Man_h)}\lesssim\|u_0\|_{\dot{H}^1(\Man_h)}
\end{align}
for all wave admissible triple $(s,q,r)$, while in the massive case we have
\begin{align}\label{massive}
    \|e^{it\D_m}u_0\|_{L^q_tH^{1/2-s}_r(\Man_h)}\lesssim\|u_0\|_{H^1(\Man_h)}\quad (m\neq 0)
\end{align}
for all Klein-Gordon admissible triple  $(s,q,r)$ such that $q>2$.
\end{theorem}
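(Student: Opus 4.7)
The plan is to reduce the Dirac dynamics to a perturbed Klein-Gordon (massive case) or wave (massless case) dynamics, whose Strichartz theory on asymptotically flat manifolds is already established in \cite{sogge2010concerning,d2015kato,zhang2019strichartz}, and to absorb the perturbation via the local smoothing estimates for the Dirac flow of \cite{Weakdis}. This bypasses the obstruction pointed out in the introduction, namely that the perturbation of the Minkowski Dirac operator is of first order and hence too strong for a direct Duhamel argument at the Dirac level.

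The first step is to square the Dirac operator via a Lichnerowicz-type identity. Expanding $\mathcal{D}_m^2$, the symmetric part in the gamma matrices produces the Laplace-Beltrami operator while the antisymmetric part collects the curvature data, yielding
\begin{equation*}
\mathcal{D}_m^2=-\widetilde{\Delta}_h+m^2+W,\qquad W=W_0+W_1^{\,j}\partial_j,
\end{equation*}
where $W_0,W_1^j$ are matrix-valued coefficients that depend polynomially on $h,\partial h,\partial^2 h,B,\partial B,\Gamma$ and $\mathcal{R}_h$. By Remark \ref{rem:fBR} one has $|W_0(x)|,|W_1(x)|\lesssim\an{x}^{-2-\sigma}$, so $W$ is short range. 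Since $u(t):=e^{it\mathcal{D}_m}u_0$ satisfies $(\partial_t^2+\mathcal{D}_m^2)u=0$, it solves
\begin{equation*}
(\partial_t^2-\widetilde{\Delta}_h+m^2)u=-Wu,\qquad u(0)=u_0,\ \partial_t u(0)=-i\mathcal{D}_m u_0,
\end{equation*}
and Duhamel's formula applied to the Klein-Gordon propagator $\cos(t\sqrt{\widetilde H})$ and $\sin(t\sqrt{\widetilde H})/\sqrt{\widetilde H}$, with $\widetilde H:=-\widetilde{\Delta}_h+m^2$, represents $u$ as the sum of two homogeneous pieces plus a retarded integral of $Wu$.

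The two homogeneous terms are handled by the wave/Klein-Gordon Strichartz bounds of \cite{sogge2010concerning,d2015kato,zhang2019strichartz}, using that under \textbf{(A)} the operators $|\mathcal{D}_m|$ and $\sqrt{\widetilde H}$ induce equivalent norms on Sobolev scales of order at most $1$, again from Remark \ref{rem:fBR}. For the retarded integral, I would pair the dual inhomogeneous Strichartz estimate for $\sqrt{\widetilde H}$ with the local smoothing bound
\begin{equation*}
\|\an{x}^{-1-\varepsilon}\nabla e^{it\mathcal{D}_m}u_0\|_{L^2_tL^2_x(\Man_h)}\lesssim \|u_0\|_{\dot H^1(\Man_h)}
\end{equation*}
proved in \cite{Weakdis}. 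Since $|W_1(x)|\lesssim\an{x}^{-2-\sigma}$, this controls $\|Wu\|_{L^2_tL^2_x(\an{x}^{2+\sigma}\,dx)}$ in terms of $\|u_0\|_{\dot H^1(\Man_h)}$, which combined with the companion Kato smoothing bound for the Klein-Gordon flow from \cite{d2015kato} closes the dual Strichartz pairing.

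The principal difficulty is exactly the first-order piece $W_1^j\partial_j$: mere pointwise decay of $W_1$ is not enough, and the half-derivative loss must be compensated by the local smoothing gain, which is precisely why it is crucial that the Dirac flow itself satisfies a Kato-type smoothing bound in this geometry. A Christ--Kiselev argument is then used to pass from the non-sharp $L^2_tL^2_x$ smoothing bound to the retarded Strichartz estimate for all admissible triples with $q>2$ (the restriction $q>2$ in the massive statement coming from the fact that the endpoint smoothing estimate is not available without additional care). Finally, to convert the wave/KG Sobolev norms built from $\widetilde{\Delta}_h$ into the Dirac Sobolev norms in \eqref{massless}--\eqref{massive}, and to cover the full range of admissible pairs, one interpolates and dualizes, relying once more on the norm equivalences granted by Remark \ref{rem:fBR}.
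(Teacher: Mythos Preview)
Your proposal is correct and follows essentially the same route as the paper: square the Dirac operator to obtain a wave/Klein--Gordon equation with a first-order perturbation coming from the difference between the spinorial and scalar Laplacians, apply the known Strichartz estimates for the scalar flow, and absorb the perturbation via the dual smoothing estimate for the wave/KG propagator together with the Dirac local smoothing from \cite{Weakdis}, using Christ--Kiselev for the retarded integral. The only minor discrepancies are in the precise smoothing weights (the paper uses $\an{x}^{-1/2-}$ on the gradient rather than $\an{x}^{-1-\varepsilon}$, matching the dual weight from the wave/KG smoothing) and the fact that no interpolation/duality step is needed at the end, since the Strichartz bounds are applied directly for every admissible triple.
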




Let us briefly comment on the strategy of our proof, which is short, but relies on several different recent results. The idea consists in squaring equation \eqref{eq:ques-D} in order to obtain a system of wave or Klein-Gordon equations on the manifold $(\Man,g)$ (depending on whether $m=0$ or $m>0$), and then combining the estimates for such flows on asymptotically flat manifolds with the standard argument based on Duhamel formula and local smoothing estimates to control the ``perturbative terms''. This trick is in fact widely used for the study of several properties of the Dirac equation, as the Dirac operator is indeed built as a suitable ``square root'' of the Laplacian. Anyway, we should stress the fact that this strategy comes with two main difficulties: one is in that the Laplace-Beltrami operator obtained after the squaring procedure is {\em not} the standard (or ``scalar'') one, but it is a ``spinorial'' Laplace operator (the covariant derivative is not the same as the covariant derivatives for scalar or vector fields). Therefore, it will not be possible to apply directly the existing results, and we will have somehow to estimate the difference of the solutions to the ``scalar'' and the ``spinorial'' wave/Klein-Gordon equations. This difference contains a first order term, and this represents the second difficulty, as indeed we will have to rely on a local smoothing at the ``first order'' level.

\medskip

{\bf Acknowledgments.} F.C. and L.M. acknowledge support from the University of Padova STARS project ``Linear and Nonlinear Problems for the Dirac Equation'' (LANPDE), and AS. dS. is supported by the ANR project ESSED ANR-18-CE40-0028.

\section{Proof of Theorem \ref{teo1}}

We begin with proving the statements in Remark \ref{rem:fBR}.

\begin{proposition}\label{propbound} Assume that $C_h\ll 1$. The dreibein $e$ exists and can be chosen such that there exist constants $C_B$ and $C_B'$ such that for all $x$, we have 
\[
|B(x)|\leq C_B C_h \an{x}^{-\sigma- 2},\quad |\partial B(x)|\leq C_B' C_h \an{x}^{-\sigma-3}.
\]
What is more, there exist constants $C_R$ and $C_\Gamma$ such that for all $x$, we have
\[
 |\mathcal R_h (x)| \leq C_R C_h \an{x}^{-\sigma-3},\quad  |\Gamma| \leq C_\Gamma C_h \an{x}^{-\sigma-2}.
\]
\end{proposition}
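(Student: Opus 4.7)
The plan is to build the dreibein $e$ explicitly as a matrix square root of $h^{-1}$ (or equivalently of $h$), then propagate the decay assumption \eqref{assasy} through each algebraic expression, estimating the worst term in each case.

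\textbf{Step 1: constructing $e$.} Since \eqref{assasy} gives $\|h(x)-I\|\leq C_h\an{x}^{-1-\sigma}\leq C_h$ pointwise, for $C_h\ll 1$ the matrix $h(x)$ is symmetric positive definite and close to $I$. Hence $h^{-1}(x)$ is well defined and admits a symmetric square root via the convergent power series $e(x):=(h^{-1}(x))^{1/2}=\sum_{k\geq 0}\binom{1/2}{k}(h^{-1}(x)-I)^k$. This matches \eqref{dre} with $e^i_{\;a}=e^{ia}$. A standard Neumann-type argument shows that $h^{-1}-I$ obeys the same kind of bound as $h-I$, i.e.\ $|\partial^\alpha(h^{-1}-I)(x)|\leq C C_h\an{x}^{-|\alpha|-1-\sigma}$ for $|\alpha|\leq 3$, and then differentiating the series term by term (or, more cleanly, differentiating the identity $e^2=h^{-1}$ and solving for $\partial^\alpha e$ inductively) yields
\[
|e(x)-I|\leq C\,C_h\an{x}^{-1-\sigma},\qquad |\partial^\alpha e(x)|\leq C\,C_h\an{x}^{-|\alpha|-1-\sigma},\quad 1\leq |\alpha|\leq 3.
\]

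\textbf{Step 2: the Christoffel symbols.} Formula \eqref{ichtus} is a product of a bounded factor $h^{il}$ with $\partial h$. Using $|h^{il}|\leq 1+CC_h$ and $|\partial h|\leq C_h\an{x}^{-2-\sigma}$ gives immediately
\[
|\Gamma(x)|\leq C_\Gamma\,C_h\an{x}^{-2-\sigma}.
\]
Differentiating \eqref{ichtus} once, one gets $\partial\Gamma$ as a linear combination of $\partial h^{-1}\cdot\partial h$ and $h^{-1}\cdot\partial^2 h$, both of which are bounded by $CC_h\an{x}^{-3-\sigma}$ (the first actually decays faster, as $C_h^2$ times $\an{x}^{-4-2\sigma}$).

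\textbf{Step 3: the spin connection $B$ and its derivative.} From \eqref{spincon}, $\omega_j^{\;ab}$ is a sum of two terms: $e^i_{\;a}\partial_j e^{ib}$ and $e_i^{\;a}\Gamma^i_{\;jk}e^{kb}$. In the first, $e$ is bounded and $\partial e$ decays like $\an{x}^{-2-\sigma}$; in the second, $e$ is bounded and $\Gamma\lesssim C_h\an{x}^{-2-\sigma}$ by Step 2. Since $B$ is just $\frac18[\gamma_a,\gamma_b]\,\Omega_j^{\;ab}$, a bounded algebraic combination of $\omega$, we get $|B|\leq C_B C_h\an{x}^{-2-\sigma}$. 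For $\partial B$, the product rule produces terms of type $\partial e\cdot\partial e$, $e\cdot\partial^2 e$, $\partial e\cdot\Gamma\cdot e$, and $e\cdot\partial\Gamma\cdot e$; using the bounds from Steps 1 and 2 every term is controlled by $CC_h\an{x}^{-3-\sigma}$ (the quadratic ones $\partial e\cdot\partial e$ and $\partial e\cdot \Gamma$ actually gain an extra $C_h\an{x}^{-1-\sigma}$, which we just discard).

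\textbf{Step 4: scalar curvature.} Directly from \eqref{scacur}, $\mathcal{R}_h$ is a bounded combination (with $h^{jk}$ factors) of $\partial\Gamma$ and $\Gamma\cdot\Gamma$. By Steps 2, $|\partial\Gamma|\lesssim C_h\an{x}^{-3-\sigma}$ while $|\Gamma\cdot\Gamma|\lesssim C_h^2\an{x}^{-4-2\sigma}$, so the former dominates and $|\mathcal{R}_h|\leq C_R C_h\an{x}^{-3-\sigma}$.

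The only genuinely non-mechanical step is Step 1: one must verify that taking a matrix square root preserves the algebra of the decay bound \eqref{assasy} up to third derivatives. This is where the smallness of $C_h$ enters crucially, ensuring both convergence of the square-root series and that inverting $h$ does not spoil the weighted estimates. Once $e$ has been produced with the stated bounds, everything else is a bookkeeping application of the Leibniz rule.
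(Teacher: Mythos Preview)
Your proposal is correct and follows essentially the same route as the paper: build $e$ as the symmetric square root of the (inverse) metric, derive the decay of $e-I$ and its derivatives from \eqref{assasy}, and then push those bounds through the explicit formulas \eqref{ichtus}, \eqref{spincon}, and \eqref{scacur} via the Leibniz rule. The only cosmetic difference is that you construct the square root via a binomial power series, whereas the paper invokes the inverse function theorem to obtain a smooth local inverse $F$ of $e\mapsto e^2$ near the identity and then applies the chain rule; both devices give exactly the same bounds on $e$ and $\partial^\alpha e$.
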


\begin{proof} The scalar curvature does not depend on $e$ and writes
\[
\mathcal R_h = h^{jk}(\partial_i\Gamma^i_{\;jk} - \partial_k \Gamma^i_{\; ji}+\Gamma^{l}_{\;jk}\Gamma^i_{\;li} - \Gamma^l_{\;ij}\Gamma^i_{kl})
\]
with the affine connection given by \eqref{ichtus}.
Therefore for all $x$, we have 
\[
|\mathcal R_h (x)|\leq C_R' (|\partial \Gamma (x)| + |\Gamma(x)|^2).
\]
Indeed, we can choose $C_h$ small enough such that for all $x$, $|h^{-1}(x)|\leq 2$.
Therefore,
\[
|\Gamma(x)| \leq C_\Gamma |h'(x)| \leq C_\Gamma C_h \an{x}^{-2-\sigma}
\]
and since $(h^{-1})'(x) = - h^{-1}(x)h'(x)h^{-1}(x)$, choosing $C_h\ll 1$, we get
\[
|\partial \Gamma (x)|\leq C_\Gamma' (|h'(x)|^2 + |h''(x)|) \leq C_\Gamma' C_h \an{x}^{-3-\sigma}.
\]
We deduce 
\[
|\mathcal R_h(x)|\leq C_R C_h \an{x}^{-3-\sigma}.
\]
We look for $e$ a matrix bundle such that 
\[
h^{ij}(x) = e^i_{\; a}(x) \delta^{ab} e^j_{\; b}(x)
\]
for all $x$. This can be rewritten as 
\[
h^{ij}(x) = e^{ia}(x) \delta_{ab}e^{jb}(x).
\]
If we restrict $(e^{ia})_{1\leq i,a\leq 3}$ to be symmetric, this rewrites as
\[
h=e^2.
\]
As it is well-known, $e\mapsto e^2$ seen as a map from the the symmetric matrices to the symmetric matrices is $\mathcal C^\infty$ and its differential at the identity is twice the identity of the symmetric matrices. Therefore, it can be reversed into a $\mathcal C^\infty$ map $F$ (a square root) around the identity since $Id^2 = Id$. We choose $C_h$ small enough such that for all $x$, $h(x)$ lies in $K$ a compact subset of the definition set of $F$ and we choose $e(x) = F(h(x))$ for all $x$. By the increment theorem, we have 
\[
|e(x) - Id| \leq \sup_K |DF| |h(x)-Id|.
\]
We also have that for all $x$,
\begin{multline*}
|e'(x)| \leq \sup_K |D F|\, |h'(x)| , \quad |e''(x)| \leq \sup_K |D^2 F|\, |h'(x)|^2 + \sup_K |DF|\, |h''(x)|, \\
|e'''(x)|\leq \sup_K |D^3 F|\, |h'(x)|^3 + 3 \sup_K|D^2F| \, |h'(x)|\, |h''(x)| + \sup_K |D F|\, |h'''(x)|.
\end{multline*}
Therefore, we deduce that for all $x$,
\begin{multline*}
|e(x)-Id| \leq C_e C_h \an{x}^{-\sigma-1}, \quad |e'(x)|\leq C_e C_h \an{x}^{-2-\sigma},\\
|e''(x)|\leq C_e C_h \an{x}^{-3-\sigma}, \quad |e'''(x)|\leq C_e C_h \an{x}^{-4-\sigma}.
\end{multline*}
We deduce 
\[
|\omega (x)|\leq C_\omega C_h \an{x}^{-2-\sigma}, \quad |B(x)| \leq  C_B  C_h \an{x}^{-2-\sigma},\quad \textrm{and}\quad |\partial B(x)| \leq  C_B'  C_h \an{x}^{-3-\sigma}.
\]
\end{proof}

We now recall the connection between the Dirac and the wave/Klein-Gordon equations on manifolds, and recall the local smoothing estimate for it. This is the first main ingredient in our proof.

\begin{theorem}\label{lem:Dirac-wave}
If $u$ is a solution to \eqref{eq:ques-D}, then it also solves the following
\begin{align}\label{eq:ques-KG}
    \left\{
    \begin{aligned}
    &\partial_t^2u+m^2u-\Delta_h u+\frac{1}{4}\mathcal{R}_h u=0,\quad (t,x)\in \R^+\times \R^3,\\
    &\partial_t u(0,x)=i\D_m u_0(x),\quad u(0,x)=u_0(x),\quad x\in\R^3
    \end{aligned}
    \right.
\end{align}
where $\Delta_h =D^i D_i$ with $D_i$ given by \eqref{conneq}. Besides, 
\begin{align}\label{spinvsscal}
    \Delta_h v=\widetilde{\Delta}_h v+ B^i\partial_i v+\widetilde{D}^i B_iv+B^iB_i v
\end{align}
where $\widetilde{\Delta}_h$ is the Laplace-Beltrami operator for scalars tensor the 4-dimensional identity matrix, $\widetilde{D}^i\Psi_k=\partial^i\Psi_k-\Gamma^{l\; i}_{\; k}\Psi_l$ and $B^i=h^{ij}B_j$. 

Let $u$ be a solution of \eqref{eq:ques-D}, then
\begin{align}\label{loc-3}
   \|\an{x}^{-3/2-}u\|_{L^2_tL^2(\Man_h)} + \|\an{x}^{-1/2-}\widetilde{\nabla} u\|_{L^2_tL^2(\Man_h)}\lesssim \|\D u_0\|_{L^2(\Man_h)}
\end{align}
where $\widetilde{\nabla}$ denotes the gradient for scalar fields, that is
\[
\|\an{x}^{-1/2-}\widetilde{\nabla} u\|_{L^2_tL^2(\Man_h)}^2 := \int_{\Man} \an{x}^{-1-} h^{ij}\an{\partial_i u , \partial_j u}_{\C^4} dx.
\]
\end{theorem}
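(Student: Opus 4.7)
The plan is to prove the three assertions of the theorem in the order they are stated.

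\textbf{Step 1 (reduction to Klein-Gordon).} Starting from $i\partial_t u = \D_m u$ and differentiating in time (noting that $\D_m$ carries no time dependence), I get
\[
-\partial_t^2 u \;=\; i\partial_t(\D_m u) \;=\; \D_m(i\partial_t u) \;=\; \D_m^2 u.
\]
The key ingredient is the Lichnerowicz-Weitzenb\"ock identity for the spin-Dirac operator on the spatial slice:
\[
\D_m^2 \;=\; -\Delta_h + m^2 + \tfrac{1}{4}\mathcal{R}_h,
\]
where $\Delta_h=D^iD_i$ is the spinorial Bochner Laplacian and $\mathcal R_h$ is the scalar curvature \eqref{scacur}. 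Plugging this into $\partial_t^2 u = -\D_m^2 u$ yields \eqref{eq:ques-KG}. The initial data $\partial_t u(0)$ is obtained by evaluating $i\partial_t u = \D_m u$ at $t=0$ (modulo the sign convention used in the statement).

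\textbf{Step 2 (splitting the spinorial Laplacian).} I expand $\Delta_h v = h^{ij} D_j D_i v$ by brute force, being careful that $D_j$ now acts on the 1-form valued spinor $D_i v$, so
\[
D_j(D_i v) \;=\; \partial_j(D_i v) + B_j (D_i v) - \Gamma^{k}_{\;ji} D_k v.
\]
Substituting $D_i v = \partial_i v + B_i v$ and distributing, one obtains terms of three types: (i) pure second derivatives $h^{ij}\partial_i\partial_j v$ together with $-h^{ij}\Gamma^k_{\;ji}\partial_k v$, which combine into $\widetilde{\Delta}_h v$; (ii) first-order terms $h^{ij}(B_i\partial_j v + B_j\partial_i v)$, accounting for $B^i\partial_i v$; (iii) zero-order terms $h^{ij}(\partial_j B_i - \Gamma^k_{\;ji}B_k)v = (\widetilde{D}^i B_i)v$ together with $h^{ij}B_jB_i v = B^i B_i v$. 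Grouping these gives the decomposition \eqref{spinvsscal}.

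\textbf{Step 3 (local smoothing).} The estimate \eqref{loc-3} is not proved here from scratch: under the decay hypothesis \textbf{(A)} on $h$, it is established in \cite{Weakdis} as a Morawetz/Kato-type smoothing bound for the Dirac flow on asymptotically flat manifolds. I would cite that result and translate the notation so that $\widetilde{\nabla}$ is read as the $h$-gradient on scalars, with the quadratic form as written in the theorem. Proposition \ref{propbound} (just proved) ensures that the geometric coefficients $B,\partial B,\mathcal{R}_h,\Gamma$ satisfy the decay needed to match the hypotheses of \cite{Weakdis}.

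\textbf{Where the difficulty lies.} Steps 1 and 3 are essentially quotations (of the Lichnerowicz formula and of a known smoothing estimate respectively). The only genuine computation is Step 2, and there the tricky point is keeping the Christoffel corrections in the outer derivative $D_j(D_i v)$ straight, so that the affine terms recombine precisely into $\widetilde{\Delta}_h$ and $\widetilde{D}^i B_i$ without leaving a residue. This is a bookkeeping issue rather than a conceptual one.
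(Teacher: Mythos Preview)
Your Steps 1 and 2 are fine and in fact more detailed than the paper's own argument, which simply cites \cite{Weakdis} for both the Lichnerowicz identity $\D_m^2=m^2-\Delta_h+\tfrac14\mathcal R_h$ and the decomposition \eqref{spinvsscal}. (One bookkeeping remark on Step 2: the two cross terms $h^{ij}B_i\partial_j v$ and $h^{ij}B_j\partial_i v$ together give $2B^i\partial_i v$, not $B^i\partial_i v$; this is harmless for the application, since downstream the paper sets $\Omega_1(u)=2B^i\partial_i u$, but you should keep the factor straight.)

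The only real gap is in Step 3. The smoothing estimate of \cite[Theorem 1.1]{Weakdis} controls the \emph{spinorial} covariant gradient $\nabla=(D_1,D_2,D_3)=\widetilde\nabla+B$, not the scalar gradient $\widetilde\nabla$ appearing in \eqref{loc-3}. So this is not merely a ``translation of notation'': one has to pass from $\|\langle x\rangle^{-1/2-}\nabla u\|_{L^2_tL^2}$ to $\|\langle x\rangle^{-1/2-}\widetilde\nabla u\|_{L^2_tL^2}$. The paper does this explicitly: write $\widetilde\nabla u=\nabla u-Bu$ and use the decay $|B|\lesssim C_h\langle x\rangle^{-2-\sigma}$ from Proposition \ref{propbound} to bound
\[
\|\langle x\rangle^{-1/2-}Bu\|_{L^2_tL^2(\Man_h)}\;\lesssim\;\|\langle x\rangle^{-3/2-}u\|_{L^2_tL^2(\Man_h)}\;\lesssim\;\|\D_m u_0\|_{L^2(\Man_h)},
\]
the last inequality being the zeroth-order part of the cited estimate. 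This is a genuine (if trivial) step, and your write-up should make it explicit rather than absorbing it into ``matching hypotheses''.
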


\begin{proof}
The relation between the Dirac equation and system \eqref{eq:ques-KG} is shown in Corollary 1 and Formula (34) in \cite{Weakdis}. Furthermore, by Proposition 4 in \cite{Weakdis}, we have
\begin{align}\label{eq:D^2}
    \D_m^2=m^2-\Delta_h+\frac{1}{4}\mathcal{R}_h.
\end{align}

Let us now prove the local smoothing estimate \eqref{loc-3}. It is shown in \cite[Theorem 1.1]{Weakdis} that
\begin{align}\label{loc-4}
   \|\an{x}^{-3/2-}u\|_{L^2_tL^2(\Man_h)} + \|\an{x}^{-1/2-}\nabla u\|_{L^2_tL^2(\Man_h)}\lesssim \|\D_m u_0\|_{L^2(\Man_h)}.
\end{align}
Recall that $\nabla=(D_1,D_2,D_3)=\widetilde{\nabla}+B$ where $B=(B_1,B_2,B_3)$. According to Proposition \ref{propbound}, 
\[
\|\an{x}^{-1/2-}Bu\|_{L^2_tL^2(\Man_h)}\leq C_BC_h\|\an{x}^{-3/2-}u\|_{L^2_tL^2(\Man_h)}\lesssim \|\D_m u_0\|_{L^2(\Man_h)}.
\]
Thus,
\begin{align*}
    \MoveEqLeft \|\an{x}^{-3/2-}u\|_{L^2_tL^2(\Man_h)} + \|\an{x}^{-1/2-}\widetilde{\nabla} u\|_{L^2_tL^2(\Man_h)}\\
    &\leq \|\an{x}^{-3/2-}u\|_{L^2_tL^2(\Man_h)}+ \|\an{x}^{-1/2-}\nabla u\|_{L^2_tL^2(\Man_h)}+ \|\an{x}^{-1/2-}B u\|_{L^2_tL^2(\Man_h)}.
\end{align*}
Hence 
\[
\|\an{x}^{-3/2-}u\|_{L^2_tL^2(\Man_h)} + \|\an{x}^{-1/2-}\widetilde{\nabla} u\|_{L^2_tL^2(\Man_h)}\lesssim\|\D_m u_0\|_{L^2(\Man_h)}
\]
and this concludes the proof.
\end{proof}

As a second ingredient, we need to provide suitable Strichartz and local smoothing estimates for the solutions to the ``auxiliary'' systems of wave and Klein-Gordon equations. We state them in the next two Theorems.

\begin{theorem}[Strichartz estimates for wave/Klein-Gordon]\label{th:Stri-wave}
Let $(\Man,g)$ be a 4-dimensional Lorentzian manifold satisfying assumptions {\bf (A)} and let $u$ be a solution to the following system:
\begin{align}\label{eq:ques-wave}
    \left\{
    \begin{aligned}
    &\partial_t^2u+m^2u-\widetilde{\Delta}_h u=0,\quad (t,x)\in {(t,x)\in \Man},\\
    &\partial_t u(0,x)=u_1(x),\quad u(0,x)=u_0(x),\quad x\in\R^3.
    \end{aligned}
    \right.
\end{align}
If $m=0$, then u satisfies
\begin{align}\label{stri-w}
    \|u\|_{L^q_t\dot{H}^{1-s}_{r}(\Man_h)}\lesssim \|u_0\|_{\dot{H}^{1}(\Man_h)}+\|u_1\|_{L^2(\Man_h)}
\end{align}
for any wave admissible triple $(s,q,r)$. 

\noindent
If $m> 0$, then $u$ satisfies
\begin{align}\label{stri-KG}
    \|u\|_{L^q_t H^{1/2-s}_{r}(\Man_h)}\lesssim \|u_0\|_{\dot{H}^{1/2}(\Man_h)}+\|u_1\|_{H^{-1/2}(\Man_h)}.
\end{align}
for any Klein-Gordon admissible triple $(s,q,r)$.
\end{theorem}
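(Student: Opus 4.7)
The system \eqref{eq:ques-wave} is simply the wave (respectively Klein-Gordon) equation for the scalar Laplace-Beltrami operator $\widetilde\Delta_h$, applied componentwise to the $\C^4$-valued unknown $u$. The plan is therefore to deduce \eqref{stri-w}-\eqref{stri-KG} from the Strichartz estimates for the scalar wave/Klein-Gordon flows on asymptotically flat manifolds that are already available in the literature, namely \cite{sogge2010concerning} in the massless case and \cite{zhang2019strichartz} in the massive case, and to check that Assumptions \textbf{(A)} fit into the hypotheses of those works. In both cases the plan is to split
\[
u(t) = \cos(t\sqrt{m^2-\widetilde\Delta_h})u_0 + \frac{\sin(t\sqrt{m^2-\widetilde\Delta_h})}{\sqrt{m^2-\widetilde\Delta_h}}u_1
\]
so that the two-sided estimates \eqref{stri-w}-\eqref{stri-KG} reduce to the corresponding one-sided half-wave / half-Klein-Gordon Strichartz estimates, which are exactly what is proved in the cited papers.

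For the massless case, the half-wave estimate $\|e^{it\sqrt{-\widetilde\Delta_h}}f\|_{L^q_t\dot H^{1-s}_r(\Man_h)}\lesssim \|f\|_{\dot H^1(\Man_h)}$ for any wave admissible $(s,q,r)$ is the main theorem of \cite{sogge2010concerning} (and of closely related work by Hassell-Tao-Wunsch and Metcalfe-Tataru), provided the manifold is asymptotically flat with enough regularity and decay and is non-trapping. Assumption \textbf{(A)} provides $C^3$ regularity and the decay $|\partial^\alpha(h-\delta)|\lesssim \an{x}^{-|\alpha|-1-\sigma}$, which is in fact stronger than what those works require, while the non-trapping property is ensured by the smallness of $C_h$ in the sense already invoked in Remark~1 (see \cite{tataru,cacciafestadanconaluca}). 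Applying the estimate componentwise to the $\C^4$-valued $u$ then yields \eqref{stri-w}; the $\sin/\sqrt{-\widetilde\Delta_h}$ part of the Duhamel-type splitting costs exactly one derivative, which matches the $L^2\to \dot H^1$ shift in the admissibility definition.

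For the massive case the argument is parallel, using the analogous Strichartz result for the Klein-Gordon equation on asymptotically flat manifolds proved in \cite{zhang2019strichartz} (along the lines of \cite{d2015kato}). Here the operator $\sqrt{m^2-\widetilde\Delta_h}$ has a spectral gap at $m>0$, which is precisely what allows us to work in the non-homogeneous spaces $H^{1/2-s}_r(\Man_h)$ and $H^{\pm 1/2}(\Man_h)$ instead of homogeneous ones; the condition $q>2$ is the admissibility restriction inherent in the Klein-Gordon Keel-Tao range used in \cite{zhang2019strichartz}, which excludes the endpoint in this setting. The same Cauchy-data splitting, applied componentwise, gives \eqref{stri-KG}.

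The main subtlety I expect is not in the dispersive analysis itself but in making sure that the Sobolev spaces $\dot H^s_r(\Man_h)$ and $H^s_r(\Man_h)$, defined through the spectral calculus of $\widetilde\Delta_h$, coincide (up to equivalent norms) with the function spaces used in \cite{sogge2010concerning,zhang2019strichartz} and, at integer $s$, with the covariant spaces obtained by summing $\|D^\alpha f\|_{L^r}$. This equivalence is the reason we need three derivatives of $h-\delta$ to decay: once this is available, Mikhlin/heat-kernel multiplier arguments and real-interpolation between integer-order spaces produce the required norm identifications, and the transfer of the Strichartz estimates from the cited papers to our setting becomes essentially automatic.
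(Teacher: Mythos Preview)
Your approach is essentially the paper's: the proof there consists of a direct citation of \cite{sogge2010concerning} for $m=0$ and \cite{zhang2019strichartz} for $m>0$, exactly as you outline. Two small points of divergence are worth noting. First, \cite{zhang2019strichartz} is stated for non-trapping \emph{asymptotically conic} (scattering) manifolds, not literally for asymptotically flat metrics on $\R^3$; the paper bridges this by invoking the observation (attributed to \cite{hassell2005strichartz} and \cite{rodnianski2015effective}) that any $(\R^3,h)$ with $|\partial^\alpha(h-\delta)|\lesssim \an{x}^{-|\alpha|-1}$ is asymptotically conic---you should make this step explicit rather than asserting that Zhang's result is already stated in the asymptotically flat setting. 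Second, the restriction $q>2$ does \emph{not} appear in the present theorem: estimate \eqref{stri-KG} holds for the full Klein-Gordon admissible range, and the exclusion of $q=2$ enters only later, in the proof of the main Dirac result, through the Christ--Kiselev lemma. Your discussion of the Sobolev-space equivalence is a reasonable point of caution, though the paper treats it as standard and does not comment on it.
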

\begin{proof}
When $m=0$, this is just Theorem 1.4 in \cite{sogge2010concerning}. When $m> 0$, the Strichartz estimate follows from the global-in-time Strichartz estimate on non-trapping conic manifold (i.e. scattering manifold) in \cite[Theorem 1.1]{zhang2019strichartz}. It is shown in \cite[Remark 1.2]{hassell2005strichartz} that any asymptotically flat space $(\mathbb{R}^3,h)$ with decay estimates $|\partial^\alpha(h-\delta)|\leq C_j\an{x}^{-|\alpha|-1}$ is also asymptotically conic (see also \cite[Remark 1.5]{rodnianski2015effective}). Hence we can deduce the result in the case $m> 0$.
\end{proof}

\begin{theorem}[Local smoothing estimates for wave/Klein-Gordon]\label{th:loc-wave}
Let $(\Man,g)$ be a 4-dimensional Lorentzian manifold satisfying assumptions {\bf (A)}. Then the following estimates hold
\begin{align}\label{eq:loc-wave-1}
    \|\an{x}^{-1/2-}e^{it\sqrt{-\widetilde{\Delta}_h}}f\|_{L^2_tL^2(\Man_h)}\lesssim \|f\|_{L^2(\Man_h)},
\end{align}
for any $f\in L^2(\Man_h)$, and
\begin{align}\label{eq:loc-wave-3}
      \|\an{x}^{-1/2}e^{it\sqrt{m^2-\widetilde{\Delta}_h}}f\|_{L^2_tL^2(\Man_h)}\lesssim \|(1-\widetilde \Delta_h)^{1/4}f\|_{L^2(\Man_h)}
\end{align}
for any $f$ such that $(1-\widetilde \Delta_h)^{1/4}f \in L^2(\Man_h)$.
\end{theorem}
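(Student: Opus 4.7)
The plan is to deduce both estimates from the classical Kato smoothing principle, which converts the $L^2_t L^2$-type bound for the half-wave/half-Klein-Gordon propagator into a resolvent (limiting absorption) estimate for the Laplace-Beltrami operator. More precisely, setting $P_m := \sqrt{m^2 - \widetilde\Delta_h}$ (self-adjoint on $L^2(\Man_h)$) and choosing a weight $W$, Plancherel in time together with the spectral theorem gives
\[
\|W e^{itP_m} f\|_{L^2_t L^2}^2 \;=\; c \int_{\mathbb{R}} \bigl\|W \delta(P_m-\lambda) f\bigr\|_{L^2}^2\, d\lambda,
\]
so the desired estimate is equivalent (by the standard $TT^\ast$ trick of Kato) to a uniform bound on the boundary values $W (P_m - \lambda \mp i0)^{-1} W^\ast$ on the spectrum of $P_m$. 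The first step is therefore to set up this reduction carefully in each of the two cases.

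Second, I would deduce the resolvent bound for $P_m$ from the known resolvent estimate for $-\widetilde\Delta_h$ on non-trapping asymptotically flat manifolds. Under assumptions \textbf{(A)} with $C_h$ small enough, the manifold is non-trapping (cf.\ the remark after the assumptions), and the Agmon--Kato--Kuroda / Morawetz-type limiting absorption principle gives
\[
\bigl\|\langle x\rangle^{-1/2-}(-\widetilde\Delta_h - z)^{-1}\langle x\rangle^{-1/2-}\bigr\|_{L^2\to L^2} \;\lesssim\; |z|^{-1/2},\qquad z\in\C\setminus\mathbb{R}_+,
\]
uniformly up to the spectrum; this is precisely the content of \cite{d2015kato} (and is also embedded in the resolvent analysis underlying \cite{zhang2019strichartz}). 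Using the algebraic identity
\[
(P_m - \lambda \mp i0)^{-1} \;=\; (P_m + \lambda)\bigl(-\widetilde\Delta_h + m^2 - (\lambda\pm i0)^2\bigr)^{-1},
\]
one transfers the $-\widetilde\Delta_h$-bound into a bound for $P_m$.

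For the massless case this directly produces the conjugated bound $\|\langle x\rangle^{-1/2-}(P_0-\lambda\mp i0)^{-1}\langle x\rangle^{-1/2-}\|_{L^2\to L^2}\lesssim 1$ uniformly in $\lambda>0$, and Kato's theorem yields \eqref{eq:loc-wave-1}. For the Klein-Gordon case, the spectrum of $P_m$ is $[m,+\infty)$ and, after the change of variable $\lambda^2=m^2+\mu^2$ with $\mu\geq 0$, the resolvent bound comes with an extra factor $\lambda/\mu = \langle\mu\rangle/\mu$ at low frequencies. This loss is exactly absorbed by conjugating with the weight $\langle x\rangle^{-1/2}(1-\widetilde\Delta_h)^{-1/4}$ on the right: spectrally, $(1-\widetilde\Delta_h)^{-1/4}$ behaves like $\langle\mu\rangle^{-1/2}$ and compensates the factor $\sqrt{\langle\mu\rangle/\mu}$ that appears when one takes the square root of the resolvent estimate. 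This gives precisely \eqref{eq:loc-wave-3}.

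The main obstacle is really the uniform resolvent estimate near the spectrum of $-\widetilde\Delta_h$, both at $z\to 0$ (low frequencies) and at $|z|\to\infty$ (high frequencies, where non-trapping is essential). Both regimes are already handled in \cite{d2015kato}, so in practice this step is invoked as a black box; the only task specific to us is to check that the derivative decay in assumption \textbf{(A)}, which is slightly stronger than needed for \cite{d2015kato}, indeed places the metric in the framework of that paper and of the scattering/asymptotically conic setting of \cite{zhang2019strichartz} (cf.\ \cite[Remark 1.2]{hassell2005strichartz}). Once the resolvent estimate is in hand, the two smoothing bounds follow in a purely spectral-theoretic fashion as outlined above.
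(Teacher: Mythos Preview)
Your reduction to Kato smoothing via the resolvent identity is the right framework, and for the massless estimate \eqref{eq:loc-wave-1} it is equivalent to what the paper does: the paper simply conjugates by $(\det h)^{1/4}$ to pass to the flat measure and quotes the half-wave smoothing directly from \cite{BonySemi} or \cite{sogge2010concerning}, which is the same information as your limiting absorption bound.

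The massive case, however, has a genuine gap. Writing $\mu^2=\lambda^2-m^2$, the limiting absorption bound you invoke gives
\[
\bigl\|\langle x\rangle^{-1/2-}(-\widetilde\Delta_h-\mu^2\mp i0)^{-1}\langle x\rangle^{-1/2-}\bigr\|\ \lesssim\ \mu^{-1},
\]
and after multiplying by $P_m+\lambda\sim\langle\mu\rangle$ you obtain a bound of order $\langle\mu\rangle/\mu$ on the conjugated $P_m$-resolvent. Near the threshold $\lambda\to m$ (i.e.\ $\mu\to 0$) this diverges like $m/\mu$, while the spectral weight $(1-\widetilde\Delta_h)^{-1/4}$ contributes only $\langle\mu\rangle^{-1/2}\to 1$ on each side; it therefore cannot absorb the singularity, and your ``square root'' heuristic does not help since Kato smoothing requires the \emph{supremum} of the conjugated resolvent to be finite. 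The missing ingredient is a second, low-energy input: the uniform bound $\sup_{z}\|\langle x\rangle^{-1}(-\widetilde\Delta_h-z)^{-1}\langle x\rangle^{-1}\|<\infty$, or equivalently Schr\"odinger smoothing of $\langle x\rangle^{-1}$. This is precisely the route the paper takes: it first obtains $\|\langle x\rangle^{-1}e^{-it\widetilde\Delta_h}f\|_{L^2_tL^2}\lesssim\|f\|_{L^2}$ from \cite{zhang2017global}, and then applies the abstract transfer theorems of \cite{d2015kato} (Theorems~2.2 and~2.4), which convert $H$-smoothness of $\langle x\rangle^{-1}$ into the Klein--Gordon estimate with weight $\langle x\rangle^{-1/2}$ and loss $(m^2-\widetilde\Delta_h)^{1/4}$. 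Adding this low-frequency input to your outline would repair the argument; without it, \eqref{eq:loc-wave-3} does not follow from the high-frequency LAP alone.
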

\begin{proof}
Let us consider the following unitary transform
\begin{align*}
    \mathcal{V}: L^2(\mathbb{R}^3,\sqrt{\det h(x)}dx)\to L^2(\mathbb{R}^3,dx),\quad v\mapsto (\det h(x))^{1/4}v.
\end{align*}
The transformation $\mathcal{V}$ sends $-\widetilde{\Delta}_h$ to 
\[
P=-\mathcal{V}\widetilde{\Delta}_h \mathcal{V}^{-1}=-(\det h(x))^{1/4}\widetilde{\Delta}_h(\det h(x))^{-1/4}.
\]
Let us start with the massless case. Let $u:=e^{it\sqrt{-\widetilde{\Delta}_h}}f\in L^2(\Man,dg)$ and let $v=\mathcal{V}u$; then $v\in L^2(\mathbb{R}^+_t\times\mathbb{R}^3,dx)$, and
\begin{align*}
\partial_t^2u-\widetilde{\Delta}_hu=0 \Leftrightarrow\partial_t^2v-P v=0.
\end{align*}
According to 
\cite[Theorem 1.3]{BonySemi} or \cite[Page 24, Section 6]{sogge2010concerning}, we get
\begin{align*}
    \|\an{x}^{-1/2-}u\|_{L^2_tL^2(\Man_h)}=\|\an{x}^{-1/2-}v\|_{L^2(\mathbb{R}^+\times\mathbb{R}^3)}\lesssim \|\an{x}^{-1/2-}v(0)\|_{L^2(\mathbb{R}^3)}=\|f\|_{L^2(\Man_h)}.
\end{align*}
For the second estimate (massive case), according to \cite[Formula (3.5) and Proposition 3.1]{zhang2017global} (taking $V=0$), we know that $\an{x}^{-1}$ is $-\widetilde{\Delta}_h$-smooth, i.e.,
\[
\|\an{x}^{-1}e^{-it\widetilde{\Delta}_h}f\|_{L^2_tL^2(\Man_h)}\lesssim \|f\|_{L^2(\Man_h)}.
\]
It follows from \cite[Theorem 2.2 and Theorem 2.4]{d2015kato} that
\[
\|\an{x}^{-1/2}e^{it(-\sqrt{m^2-\widetilde{\Delta}})}f\|_{L^2_tL^2(\Man_h)}\lesssim \|(m^2-\widetilde{\Delta}_h)^{1/4}f\|_{L^2}
\]
and this concludes the proof.
\end{proof}

We are now in a position to prove our main result.
\begin{proof}[Proof of Theorem \ref{teo1}.]
According to \eqref{eq:ques-KG} and \eqref{spinvsscal}, by Duhamel Formula, we can write for any $m\geq0$
\begin{align}
    u(t,x):=e^{it\D_m}u_0=\dot{W}_m(t)u_0+iW_m(t)\D_m u_0+\int_0^t W_m(t-s)(\Omega_1(u)(s)+\Omega_2 u(s)) ds
\end{align}
where 
\[
W_m(t)=\frac{\sin(t\sqrt{m^2-\widetilde{\Delta}_h})}{\sqrt{m^2-\widetilde{\Delta}_h}},\quad \dot{W}_m=\partial_t W_m
\]
and 
\begin{equation}\label{Omega}
\Omega_1(u):=2B^i\partial_i u,\quad \Omega_2:=\partial^iB_i+B^iB_i-\Gamma_{\; i}^{j\; i}B_j-\frac14 \mathcal{R}_h.
\end{equation}

We deal with the massless and massive cases separately, starting with the former. According to Christ-Kiselev Lemma \cite{ChrKisMax} and Theorem \ref{th:Stri-wave}, 
\begin{multline*}
  \left\|\int_0^TW_m(t-s)(\Omega_1(u)(s)+\Omega_2 u(s))ds\right\|_{L^q_t\dot{H}^{1-s}_r(\Man_h)}\\
  \lesssim \left\|\frac{e^{it\sqrt{-\widetilde{\Delta}_h}}}{\sqrt{-\widetilde{\Delta}_h}}\int_0^Te^{-is\sqrt{-\widetilde{\Delta}_h}} (\Omega_1(u)(s)+\Omega_2 u(s))ds\right\|_{L^q_t\dot{H}^{1-s}_r(\Man_h)}\\
  \lesssim \left\|\int_0^Te^{-is\sqrt{-\widetilde{\Delta}_h}} (\Omega_1(u)(s)u(s)+\Omega_2 u(s))ds\right\|_{L^2(\Man_h)}.
\end{multline*}
By the dual form of \eqref{eq:loc-wave-1}, 
\begin{multline*}
    \left\|\int_0^Te^{-is\sqrt{-\widetilde{\Delta}_h}} (\Omega_1(u)(s)u(s)+\Omega_2 u(s))ds\right\|_{L^2(\Man_h)}
    \lesssim \|\an{x}^{1/2+}(\Omega_1(u)(s)u+\Omega_2 u)\|_{L^2_tL^2(\Man_h)}.
\end{multline*}
Then by \eqref{loc-3} and Remark \ref{rem:fBR}, we have
\[
\|\an{x}^{1/2+}\Omega_2 u\|_{L^2_tL^2(\Man_h)}\lesssim \|\an{x}^{2+}\Omega_2\|_{L^\infty}\|\an{x}^{-3/2-}u\|_{L^2_tL^2(\Man_h)}\lesssim \|\D u_0\|_{L^2(\Man_h)}.
\]

Then by \eqref{loc-3} and Remark \ref{rem:fBR}, we have
\[
\|\an{x}^{1/2+}\Omega_1(u)\|_{L^2_tL^2(\Man_h)}\lesssim \|\an{x}^{1+}B\|_{L^\infty_x}\|\an{x}^{-1/2-}\widetilde{\nabla} u\|_{L^2_tL^2(\Man_h)}\lesssim \|\D u_0\|_{L^2(\Man_h)}.
\]
Thus thanks to Lemma \ref{lem:norm},
\[
\|u\|_{L^q_t\dot{H}^{1-s}_r(\Man_h)}\lesssim \|u\|_{\dot{H}^1(\Man_h)}+\|\D u_0\|_{L^2(\Man_h)}\lesssim \|u\|_{\dot{H}^1(\Man_h)}.
\]

\medskip
Now we consider the massive case. According to Christ-Kiselev lemma \cite{ChrKisMax}, since we restrict to the case $q>2$, and Theorem \ref{th:Stri-wave}, 
\begin{multline*}
  \left\|\int_0^TW_m(t-s)(\Omega_1(u)(s)+\Omega_2 u(s))ds\right\|_{L^q_t H^{1/2-s}_r(\Man_h)}\\
  \leq \left\|\frac{e^{it\sqrt{m^2-\widetilde{\Delta}_h}}}{\sqrt{m^2-\widetilde{\Delta}_h}}\int_0^Te^{-is\sqrt{m^2-\widetilde{\Delta}_h}} (\Omega_1(u)(s)+\Omega_2 u(s))ds\right\|_{L^q_tH^{1/2-s}_r(\Man_h)}\\
  \leq \left\|\int_0^Te^{-is\sqrt{m^2-\widetilde{\Delta}_h}} (\Omega_1(u)(s)+\Omega_2 u(s))ds\right\|_{H^{-1/2}(\Man_h)}.
\end{multline*}
By the dual form of \eqref{eq:loc-wave-3}, 
\[
\left\|\int_0^Te^{-is\sqrt{m^2-\widetilde{\Delta}_h}}(\Omega_1(u)(s)+\Omega_2 u(s))ds\right\|_{H^{-1/2}(\Man_h)}\lesssim \|\an{x}^{1/2}(B^i\partial_iu +\Omega_2 u)\|_{L^2_tL^2(\Man_h)}.
\]
Then by \eqref{loc-3} and Lemma \ref{lem:norm}, we have
\[
\|\an{x}^{1/2}\Omega_2 u\|_{L^2_tL^2(\Man_h)}\lesssim \|\an{x}^{2+}\Omega_2\|_{L^\infty}\|\an{x}^{-3/2-}u\|_{L^2_tL^2(\Man_h)}\lesssim \|\D_m u_0\|_{L^2(\Man_h)}\lesssim \|u_0\|_{H^1(\Man_h)}.
\]

Then by \eqref{loc-3}, Remark \ref{rem:fBR} and Lemma \ref{lem:norm}, we have
\[
\|\an{x}^{1/2}\Omega_1(u)\|_{L^2_tL^2(\Man_h)}\lesssim \|\an{x}^{1+}B\|_{L^\infty_x}\|\an{x}^{-1/2-}\widetilde{\nabla} u\|_{L^2_tL^2(\Man_h)}\lesssim \|\D u_0\|_{L^2}\lesssim \|u_0\|_{H^1(\Man_h)},
\]
so that 
\[
\|u\|_{L^q_t\dot{H}^{1/2-s}_r(\Man_h)}\lesssim \|u_0\|_{H^{1/2}(\Man_h)}+\|\D_m u_0\|_{H^{-1/2}(\Man_h)}+\|u_0\|_{H^1(\Man_h)}\lesssim \|u_0\|_{H^1(\Man_h)}.
\]
and this concludes the proof.
\end{proof}

\appendix
\section{Norm estimate}
Here we give some results concerning the relationship between the standard Sobolev norm and the one induced by the Dirac operator, which is used in the proof of Theorem \ref{teo1}.
\begin{lemma}\label{lem:norm}
Under assumptions \textbf{(A)}, for  $m\geq 0$,
\[
\|(m^2-\widetilde{\Delta}_h)^{1/2}u\|_{L^2(\Man_h)}\lesssim \|\D_m u\|_{L^2(\Man_h)}\lesssim \|(m^2-\widetilde{\Delta}_h)^{1/2}u\|_{L^2(\Man_h)}.
\]
\end{lemma}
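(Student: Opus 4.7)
The argument I would pursue rests on the Weitzenböck-type identity \eqref{eq:D^2}, $\D_m^2 = m^2 - \Delta_h + \tfrac14 \mathcal R_h$, together with the splitting \eqref{spinvsscal} that decomposes the spinorial Laplacian $\Delta_h$ into the scalar Laplace--Beltrami operator $\widetilde{\Delta}_h$ plus first- and zero-order terms built from $B$, $\Gamma$ and the scalar curvature. The first step is to test $\D_m^2 u$ against $u$ in $L^2(\Man_h)$ and integrate by parts on the $\widetilde{\Delta}_h$-piece, which produces the exact identity
\[
\|\D_m u\|_{L^2(\Man_h)}^2 = \|(m^2-\widetilde{\Delta}_h)^{1/2}u\|_{L^2(\Man_h)}^2 + E(u),
\]
where
\[
E(u) := -\langle u, B^i\partial_i u\rangle - \langle u, (\widetilde{D}^i B_i)u\rangle - \langle u, B^iB_i\, u\rangle + \tfrac14 \langle u, \mathcal R_h u\rangle.
\]
The whole task then reduces to proving $|E(u)| \leq C\,C_h \,\|(m^2-\widetilde{\Delta}_h)^{1/2}u\|_{L^2(\Man_h)}^2$ for some $C$ independent of $u$ and $m$; taking $C_h$ small enough (as already imposed in assumptions \textbf{(A)}) then yields both inequalities simultaneously.

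Next, I would estimate the three zero-order contributions (those involving $\widetilde D^i B_i$, $B^iB_i$ and $\mathcal R_h$) using the pointwise decay bounds of Proposition \ref{propbound}: each such multiplier is $O(C_h\,\an{x}^{-3-\sigma})$, in particular dominated by $C\,C_h\an{x}^{-2}$. A standard Hardy inequality on $\R^3$, which transfers to $(\Man,dg)$ since $h$ is a $C^0$-small perturbation of the identity, then gives $|\langle u, Vu\rangle|\lesssim C_h\|\an{x}^{-1}u\|_{L^2(\Man_h)}^2 \lesssim C_h\|\widetilde{\nabla} u\|_{L^2(\Man_h)}^2$, and this last quantity is controlled by $\|(m^2-\widetilde{\Delta}_h)^{1/2}u\|_{L^2(\Man_h)}^2$ uniformly in $m\geq 0$.

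For the first-order term, Cauchy--Schwarz gives $|\langle u, B^i\partial_i u\rangle|\leq \|Bu\|_{L^2(\Man_h)}\|\widetilde{\nabla} u\|_{L^2(\Man_h)}$, and since $|B(x)|\leq C_B C_h\an{x}^{-2-\sigma}\leq C_B C_h\an{x}^{-1}$, a further application of Hardy yields $\|Bu\|_{L^2(\Man_h)}\lesssim C_h\|\widetilde{\nabla} u\|_{L^2(\Man_h)}$. Hence this contribution is also bounded by $C_h\|(m^2-\widetilde{\Delta}_h)^{1/2}u\|_{L^2(\Man_h)}^2$. Combining everything produces
\[
(1-CC_h)\|(m^2-\widetilde{\Delta}_h)^{1/2}u\|^2 \leq \|\D_m u\|^2 \leq (1+CC_h)\|(m^2-\widetilde{\Delta}_h)^{1/2}u\|^2,
\]
which is the stated equivalence.

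The only step where genuine care is required is the first-order term $B^i\partial_i u$, because it is not a multiplication operator and therefore cannot be absorbed via a pure potential-type estimate. The saving feature is that $B$ decays one full power faster than the $\an{x}^{-1}$ borderline for Hardy's inequality, which is precisely what allows Cauchy--Schwarz combined with Hardy to close the bound with a small constant proportional to $C_h$. All remaining manipulations are routine and rely only on Proposition \ref{propbound} together with the equivalence (up to constants) between the flat and the $h$-weighted $L^2$ norms.
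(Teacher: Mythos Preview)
Your proposal is correct and follows essentially the same route as the paper's proof: both expand $\|\D_m u\|^2=\langle \D_m^2 u,u\rangle$ via the Weitzenb\"ock identity and \eqref{spinvsscal}, then control the first- and zero-order error terms using the decay bounds of Proposition~\ref{propbound} together with Cauchy--Schwarz and Hardy's inequality (transferred from $\R^3$ via the equivalence of the flat and $h$-weighted $L^2$ norms), so that the error is $O(C_h)\|(m^2-\widetilde\Delta_h)^{1/2}u\|^2$ and can be absorbed. The only cosmetic difference is that the paper invokes the skew-symmetry of $B_i$ before applying Cauchy--Schwarz to the first-order term, whereas you apply Cauchy--Schwarz directly; either works.
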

\begin{proof}

Unsing that $\tilde\Delta_h$ is self-adjoint on $L^2(\Man_h)$, we have 
\[
\|(m^2 - \tilde \Delta_h)^{1/2}u\|_{L^2(\Man_h)}^2 = \an{(m^2 - \tilde \Delta_h)u,u}_{L^2(\Man_h)}.
\]
We also have
\[
-\an{\tilde \Delta_h u,u}_{L^2(\Man_h)} = h^{ij} \an{\partial_i u,\partial_j u}_{L^2(\Man_h)},
\]
so that
\[
-\an{\tilde \Delta_h u,u}_{L^2(\Man_h)} = \delta^{ij} \an{\partial_i u,\partial_j u }_{L^2(\Man_h)} +  (h^{ij} - \delta^{ij}) \an{\partial_i u,\partial_j u}_{L^2(\Man_h)}.
\]
Using Cauchy-Schwarz inequality and the fact that $|h^{ij} - \delta^{ij}|\ll 1$ yields
\[
\delta^{ij} \an{\partial_i u,\partial_j u}_{L^2(\Man_h)} \lesssim -\an{\tilde \Delta_h u,u}_{L^2(\Man_h)}\lesssim \delta^{ij} \an{\partial_i u,\partial_j u}_{L^2(\Man_h)}.
\]
As the Dirac operator is self-adjoint on $L^2(\Man_h)$, we have
\[
\|\mathcal D_m u\|_{L^2(\Man_h)}^2 = \an{\mathcal D_m^2 u, u}_{L^2(\Man_h)},
\]
and thanks to the identity $\mathcal D_m^2 = m^2 - \tilde \Delta_h + \Omega_1 + \Omega_2$ with $\Omega_1,\;\Omega_2$ given by \eqref{Omega} we get
\[
\|\mathcal D_m u\|_{L^2(\Man_h)}^2 = \an{(m^2 - \tilde \Delta_h) u, u}_{L^2(\Man_h)} + \an{(\Omega_1(u)  + \Omega_2u) ,u}_{L^2(\Man_h)}.
\]
Using the skew-symmetry of $B_i$, we get
\[
\an{B^i \partial_i u, u}_{L^2(\Man_h)} = -\an{\partial_i u, B^i u}_{L^2(\Man_h)},\quad \an{B_i \partial^i u, u}_{L^2(\Man_h)} = -\an{\partial^i u, B_i u}_{L^2(\Man_h)}
\]
By Cauchy-Schwarz inequality again we get
\[
|\an{\Omega_1(u), u}_{L^2(\Man_h)}| \leq  \sum_i \|\partial_i u\|_{L^2(\Man_h)} \|B\an{x}\|_{L^\infty}\|\an{x}^{-1} u\|_{L^2(\Man_h)}.
\]
Now, as the $L^2(\Man_h)$ and the $L^2(\R^3)$ norms are equivalent (due to the assumptions on $h$), we use Hardy inequality to get
\[
|\an{\Omega_1(u), u}_{L^2(\Man_h)}| \lesssim \|B\an{x}\|_{L^\infty}  \|(m^2 - \tilde \Delta_h)^{1/2} u\|_{L^2(\Man_h)}^2 .
\]
Similarly,
\[
|\an{\Omega_2 u,u}_{L^2(\Man_h)}|\lesssim \|\an{x}^2 \Omega_2\|_{L^\infty}\|(m^2 - \tilde \Delta_h)^{1/2} u\|_{L^2(\Man_h)}^2.
\]
Now, as $\|\an x B\|_{L^\infty}\ll 1$ (see Proposition \ref{propbound}) and 
\[
\|\an{x}^2\Omega_2\|_{L^\infty}\leq \|\an{x}^2(\partial^i B_i + B^iB_i -\Gamma^{j\; i}_{\; i}B_j- \frac14\mathcal R_h)\|_{L^\infty} \ll 1,
\]
we finally get that
\[
\|(m^2 - \tilde \Delta_h)^{1/2} u\|_{L^2(\Man_h)}^2\lesssim \|\mathcal D_m u\|_{L^2(\Man_h)}^2 \lesssim \|(m^2 - \tilde \Delta_h)^{1/2} u\|_{L^2(\Man_h)}^2
\]
and this concludes the proof.

\end{proof}

\medskip

\bibliographystyle{plain}
\bibliography{StrichartzAsFlat}

\end{document}